\numberwithin{equation}{section}
\newtheorem{thm}{Theorem}[section]
\newtheorem{prop}[thm]{Proposition}
\newtheorem{defn}[thm]{Definition}
\theoremstyle{definition}
\newtheorem{rem}[thm]{Remark}
\theoremstyle{remark}
\newcommand{\ds}{\displaystyle}
\newcommand{\R}{\mathbb{R}}
\newcommand{\de}{\partial}
\patchcmd{\abstract}{\scshape\abstractname}{\textbf{\abstractname}}{}{}
\def\@makefnmark{} %note a di pagina senza numero 2
\title{An estimate for the anisotropic maximum  curvature in the planar case} 
\author[G. Paoli]{
	Gloria Paoli}
\address{Dipartimento di Matematica e Applicazioni ``R. Caccioppoli'', Universit\`a degli studi di Napoli Federico II \\ Via Cintia, Complesso Universitario Monte S. Angelo, 80126 Napoli, Italy.}
\email{gloria.paoli@unina.it}
\begin{document}
\maketitle
%\markright{\tiny Sharp estimates for the first $p$-Laplacian eigenvalue and for the torsional rigidity on convex sets with  holes}
%\markright{SHARP ESTIMATES FOR THE FIRST $p$-LAPLACIAN EIGENVALUE}
\begin{abstract}
We fix a Finsler norm $F$ and, using the anisotropic curvature  flow, we prove that the anisotropic maximum curvature $k^F_{\max}$  of a smooth Jordan curve is such that $ k^F_{\max}(\gamma)\geq \sqrt{\kappa/A}$ , where $A$ is the  area enclosed by $\gamma$ and $\kappa$ the area of the unitary Wulff shape associated to the anisotropy $F$.

\noindent \textbf{MSC 2010}: 53A04, 53C44, 35B50, 53C44 \\
\noindent \textbf{Keywords}: Anisotropy; Anisotropic maximum curvature, Anisotropic curvature flow.
\end{abstract}

\section{Introduction}

Let $\Omega\subset\mathbb{R}^2$, be a bounded, connected, $C^2$ smooth domain and let us set $\gamma:=\de\Omega$, so that $\gamma$ is  a smooth Jordan curve. In \cite{PI} the following inequality is proved:
\begin{equation}\label{pank}
k_{\max}(\gamma)\geq \sqrt{\dfrac{\pi}{A(\Omega)}},
\end{equation}
where   $k_{max}(\gamma)$ is  the maximum curvature of $\gamma$  and $A(\Omega)$ is the area enclosed by $\gamma$; moreover, equality holds if and only if $\gamma$ is a circle. 
  Since the original work \cite{PI} is hardly available, we refer the reader to \cite{HT} for the proof of \eqref{pank}.

In \cite{P} the author provides a new proof of the inequality \eqref{pank} by means of the curve shortening flow. Recalling the definition, we have that 
a  family $u:\mathbb{S}^1\times [0,T]\to\mathbb{R}^2$ of smooth Jordan curves flows by anisotropic curvature flow if 
\begin{equation}\label{anis_curv_flow}
\dfrac{\partial u(\theta,t)}{\partial t}=  -k(\theta,t)\nu(\theta,t),
\end{equation}
where $\nu(\theta,t)$ and $k(\theta,t)$ are respectively the outer unit normal and the curvature of the curve $u(\cdot,t)$ at the point $u(\theta,t)$. % We recall that the limiting shape is a round point, with convergence in $C^\infty$ norm and there exists $\tau\in[0,T)$ such that $u(\cdot, t)$ is convex for $t\in[\tau,T)$.
 For some reference about the curve shortening flow and its properties see, for example, \cite{GH, G}.

The purpose of the present paper is to find the analogous result of \eqref{pank}  in the anisotropic case. More precisely, let $F:\mathbb{R}^2\to[0,+\infty)$ be a Finsler norm; we denote by 
$$\mathcal W = \{  \xi \in  \R^2 \colon F^o(\xi)< 1 \},$$
 the  unit Wulff shape centered at the origin and we set
$\kappa :=A(\mathcal{W})$. For  every $x\in\partial \Omega$, 
	the $F$-anisotropic  curvature is defined as $$k^F_{\de \Omega}(x)= {\rm div}\left(n^F_{\de \Omega}(x)\right),$$
where $n^F_{\de \Omega}(x)=\nabla F(\nu_{\de \Omega}(x))$ is the anisotropic normal, while $\nu_{\de \Omega}(x)$ is the Euclidean outer unit normal. Then, we  denote by $k^F_{max}(\de \Omega)$ the  maximum curvature over $\de \Omega$, that is
	$$ k^F_{max}(\de \Omega):=|| k^F_{\de \Omega}||_{L^{\infty}(\de \Omega)}.$$
%	\max_{x\in\de \Omega}k^F_{\de \Omega}(x).$$
The main result of this work is the following.

\vspace{1pt}
{\bf Main Theorem.} {\it 	Let  $\Omega\subseteq \mathbb{R}^2$ such that  $\gamma:=\de \Omega$  is a  smooth Jordan curve. Then, 
	\begin{equation}\label{mainn}
			k^F_{\max}(\de\Omega)\geq 	k^F_{\max}(\de\mathcal{W}^*),
	\end{equation}
	where $\mathcal{W}^*$ is a  Wulff shape having the same area as $\Omega.$ Moreover, equality holds if and only if $\Omega$ coincides with a Wulff shape.
	.}
\vspace{1pt}\\
Equivalently, the result in \eqref{mainn} can be restated in the following  form:
	\begin{equation}\label{main}
	k^F_{\max}(\gamma)\geq \sqrt{\dfrac{\kappa}{A(\Omega)}}.
	\end{equation}
Section $3$ is dedicated to the proof of the Main Theorem. The scheme of the proof is close to the one used in \cite{P} and it is based on the use of the  anisotripic flow.
We recall that a  family $u:\mathbb{S}^1\times [0,T]\to\mathbb{R}^2$ of smooth Jordan curves flows by anisotropic curvature flow if 
\begin{equation}\label{anis_curv_flow}
\dfrac{\partial u(\theta,t)}{\partial t}=  \left(F(\nu(\theta,t))\:k^F(\theta,t)\right)\nu(\theta,t),
\end{equation}
where $k^F(\theta,t)$ is the anisotropic curvature of the curve $u(\cdot,t)$ at the point $u(\theta,t)$. For some reference see, for example,  \cite{A,BP,CZ,MNP}.
In the proof we will reduce our study to the case of convex  curves   and we will   use the so called Wulff- Gage inequality. This inequality, proved in \cite{GO},  states  that, if $K\subseteq\mathbb{R}^2$ is a convex set, then
\begin{equation}\label{Wulff Gage inequality}
\ds\int_{\de K} (k^F_{\de K}(x)^2F(\nu_{\de K}(x) ) \;d\mathcal{H}^1(x)\geq  \dfrac{\kappa P_F( K)}{A(K)},
\end{equation}
where $P_F(K)= \int_{\de \Omega}F(\nu_{\de \Omega}(x)) \, d \mathcal H^{1}(x)$ is the anisotropic perimeter of $K$. The isotropic version of this inequality was proved in \cite{GH} for convex sets of the plane and generalized in \cite{BH, FKN1, FKN2} for non convex sets, whose boundary is simply connected.

We point  out that in \cite{PP}  the authors show that  inequality \eqref{pank} can be generalized in higher dimensions if we restrict to the class of sets which are starshaped;  in this case balls still achieve the minimal maximal mean curvature among domains with the same volume. However, if we remove the additional topological constraint of starshapedness and consider bounded smooth domains with a connected boundary the result, as showed in \cite{FNT},  is no longer true for $n>3$. 

Moreover in \cite{PP}  the problem of minimizing   the maximal curvature is linked to an estimate  of the Laplacian eigenvalue problem with Robin boundary conditions as the boundary parameter $\alpha$ goes to $-\infty$.
Let $\Omega$ be a bounded,  open subset of $\mathbb{R}^n$, $n\geq2$, with Lipschitz boundary; its Robin eigenvalues related to the Laplacian are the real numbers $\lambda$ such that   
\begin{equation}\label{rob}
\begin{cases}
-\Delta u=\lambda u &\mbox{in}\ \Omega\\[.2cm]
\frac{\de u}{\de \nu}+\alpha u=0&\mbox{on}\ \de\Omega
\end{cases}
\end{equation}
admits non trivial $W^{1,2}(\Omega)$ solutions; $\alpha$ is an arbitrary  real constant, which will be referred to as boundary parameter of the Robin problem. %For each fixed $\Omega $ and $\alpha$ there is a sequence of eigenvalues 
%$$\lambda_1(\alpha,\Omega)\leq\lambda_2(\alpha,\Omega)\leq\dots\rightarrow+\infty $$
%which depend on $\alpha$.
 In particular, the first non trivial Robin eigenvalue of $\Omega$ is characterized by the expression
\begin{equation*}\label{var_char}
\lambda_1(\alpha , \Omega)=\min_{\substack{u\in W^{1,2}(\Omega) \\ u\neq 0}}\dfrac{\ds\int_{\Omega}\left\vert Du\right\vert^2\;dx+\alpha\ds\int_{\de\Omega}|u|^2\;d\mathcal{H}^1}{\ds\int_{\Omega}|u|^2\;dx}.
\end{equation*}
Let us now assume that $\alpha<0$ and $\Omega\subset\mathbb{R}^n$ is a bounded and Lipschitz domain. 
If we put a constant function as a test function in the Rayleigh quotient above, we find out that the first eigenvalue is always strictly negative. In 1977 Bareket conjectured that the maximizer of the first Robin eigenvalue with negative parameter among sets with the same volume  was a ball \cite{Ba}. However in \cite{FK} the authors disproved Bareket's conjecture, showing that the first Robin-Laplacian computed on a spherical shell is asymptotically  greater than the one computed on a ball with the same volume.  In \cite{PP} this was clarified by  showing that for $\Omega\subseteq
\mathbb{R}^n$ of class $C^{1,1}$, then the following two-terms asymptotics holds
\begin{equation}\label{pank_asy}
 \lambda_1(\Omega,\alpha)=-\alpha^2-(n-1)\alpha \sup_{\de\Omega} H+o(\alpha^{2/3}),   \end{equation}
as $\alpha\to-\infty$, where $H$ is the mean curvature of the boundary, that is a  generalisation of the curvature in higher dimension. 
 We recall that  in  \cite{FK},  it is  proved that Bareket's conjecture holds for $\alpha$ negative small enough in absolute value.  
%More precisely, the authors  proved that, for bounded planar domains of class $C^2$ and fixed  area,  there exists a negative number $\alpha_*$, depending only on the area, such that \eqref{dis} holds for all $\alpha\in[\alpha_*,0]$.
%when the boundary parameter goes to $+\infty$. 

Our inequality can possibly  have an  application in the study of the anisotropic counterpart of the Robin problem, that  is the problem  
 \begin{equation*}
\begin{cases}
-{\rm div}\left(F(Du)F_{\xi}(Du)\right)=\lambda_F(\alpha , \Omega)u &\mbox{in}\ \Omega\\[.2cm]
\langle F(Du)F_{\xi}(Du),\nu_{\de\Omega}\rangle+\alpha F(\nu_{\de\Omega})u=0 &\mbox{on}\ \de\Omega,
\end{cases}
\end{equation*}
where $F$ is a fixed Finsler norm,that has been studied for example in \cite{DG,GT,PT}, and  with the following variational characterization of the first eigenvalue:
 \begin{equation*}\label{var_char}
 \lambda_{1, F}( \alpha , \Omega )=\min_{\substack{u\in W^{1,2}(\Omega) \\ u\neq 0}}\dfrac{\ds\int_{\Omega}F^2(Du)\;dx+\alpha\ds\int_{\de\Omega}|u|^2F(\nu_{\de\Omega})\;d\mathcal{H}^1(x)}{\ds\int_{\Omega}|u|^2\;dx}.
 \end{equation*}
A possible future direction of investigation could be the generalization  of  inequality \eqref{pank_asy} for the study of the anisotropic  Robin problem.

\section{Preliminaries}
\subsection{Notation}
In the following we  will denote by $\langle\cdot,\cdot\rangle$ the standard Euclidean scalar product  and by $|\cdot|$ the Euclidean norm in $\mathbb{R}^2$.
We denote by $\mathcal{H}^1$
the $1-$dimensional Hausdorff measure in $\mathbb{R}^2$. %and by $V(\)$ the Lebesgue measure in $\mathbb{R}^2$.
If $\Omega $ is a set of $\mathbb{R}^2$ with  Lipschitz boundary, for $\mathcal{H}^{1}$-almost every $x\in\partial \Omega$, $\nu_{\de \Omega}(x)$ is the outward unit Euclidean normal to $\partial \Omega$ at $x$. Moreover,  $A(\Omega)$ is  the area of the set $\Omega$, i.e. its Lebesgue  measure in $\mathbb{R}^2$.%and $P(\Omega)$ its perimeter, i.e. its $\mathcal{H}^{1}$-measure.

\subsection{Finsler norm: definitions and some properties}
Let $F:\mathbb{R}^2\to[0,+\infty)$ be a convex function such that for some constant $a>0$
\begin{equation}
\label{eq:lin}
a|\xi| \le F(\xi),\quad \xi \in  \R^{2}
\end{equation}
and
\begin{equation}
\label{eq:omo}
F(t\xi)=|t|F(\xi), \quad t\in \R,\,\xi \in  \R^{2}.
\end{equation}
 These hypotheses on $F$ imply that there exists $b\ge a$ such that
\begin{equation*}
\label{upb}
F(\xi)\le b |\xi|,\quad \xi \in  \R^{2}.
\end{equation*}
Moreover, throughout this paper, we will assume that $F^2$ is strongly convex, that is $F\in C^{2}(\mathbb R^{2}\setminus \{0\})$ and that the Hessian matrix $\nabla^2_\xi F^2$ is positive definite in $\mathbb{R}^2\setminus	\{0\}$. %This implies that 
 %$F$ is elliptic, i.e. there exists $A>0$ such that 
% $$ \nabla^2_\xi\left( F^2\right) \geq A \;Id$$
%in the distributional sense. 
Under these assumptions,  $F$ is called an elliptic norm.
The polar function $F^o\colon \R^2 \rightarrow [0,+\infty[$ 
of $F$ is defined as
\begin{equation*}
F^o(v)=\sup_{\xi \ne 0} \frac{\langle \xi, v\rangle}{F(\xi)}
\end{equation*}
and it  is easy to verify that also $F^o$ is a convex function
that satisfies properties \eqref{eq:omo} and
\eqref{eq:lin}. $F$ and $F^o$ are usually called Finsler norm. Furthermore, it holds 
\begin{equation*}
F(v)=\sup_{\xi \ne 0} \frac{\langle \xi, v\rangle}{F^o(\xi)},
\end{equation*}
 which implies the following anisotropic version of the Cauchy Schwartz inequality
\begin{equation*}
\label{imp}
|\langle \xi, \eta\rangle| \le F(\xi) F^{o}(\eta), \qquad \forall \xi, \eta \in  \R^{2}.
\end{equation*}
We introduce now the following notations:
$$\mathcal W = \{  \xi \in  \R^2 \colon F^o(\xi)< 1 \}$$
is called the  unit Wulff shape centered at the origin and we put
$\kappa=V(\mathcal W)$.  Moreover, we denote by $\mathcal W_r(x_0)$
the set $r\mathcal W+x_0$, that is the Wulff shape centered at $x_0$
with measure $\kappa r^2$, so that  $\mathcal W_r(0)=\mathcal W_r$.
We observe that the strong convexity of $F^2$ implies that $\mathcal{W}$ is strictly convex and this ensures that $F^o\in C^1(\mathbb{R}^2\setminus\{0\})$. More precisely, we have that the strict convexity of the level sets of $F$ is equivalent to the continuous differentiability of $F^o$ in $\mathbb{R}^2\setminus\{0\})$; for more details see \cite{S}.
We conclude this paragraph recalling some useful  properties of $F$ and $F^o$:
\begin{gather*}
\label{prima}
\langle \nabla F(\xi) , \xi \rangle= F(\xi), \quad  \langle\nabla F^{o} (\xi), \xi \rangle
= F^{o}(\xi),\qquad \forall \xi \in
\R^2\setminus \{0\};
\\
\label{seconda} F(  \nabla F^o(\xi))=F^o( \nabla F(\xi))=1,\quad \forall \xi \in
\R^2\setminus \{0\};
\\
\label{terza} 
F^o(\xi)   \nabla F( \nabla F^o(\xi) ) = F(\xi) 
\nabla F^o\left(  \nabla F(\xi) \right) = \xi\qquad \forall \xi \in
\R^2\setminus \{0\}. 
\end{gather*}

\subsection{Anisotropic perimeter }
\noindent
In the following we are fixing a Finsler norm $F$. 

\begin{defn}
	Let $\Omega$ be a bounded open subset of $\R^2$ with Lipschitz boundary, the anisotropic perimeter of $\Omega$ is defined as 
	\[
	P_F(\Omega)=\displaystyle \int_{\de \Omega}F(\nu_{\de \Omega}(x)) \, d \mathcal H^{1}(x),
	\]
	where $\nu_{\de \Omega}$ is the  Euclidean outer normal to $\partial\Omega$ defined almost everywhere.
\end{defn}
Clearly, the anisotropic perimeter of  $\Omega$ is finite if and only if the  Euclidean perimeter of $\Omega$, that we denote by $P(\Omega)$, is finite. Indeed, by the quoted properties of $F$ we have  that
$$
aP(\Omega) \le P_F(\Omega) \le bP(\Omega).
$$

\noindent Moreover the following isoperimetric inequality is proved for the anisotropic perimeter, see for istance \cite{ALFT,Bu,DG,DP, FM}.
\begin{thm}
	Let $\Omega$ be a subset of $\mathbb{R}^2$ with finite perimeter. Then, 
	$$
	P_F(\Omega) \ge 2 \kappa^\frac{1}{2} {A\left(\Omega\right)}^{\frac{1}{2}},
	$$
where $A(\Omega)$ is the area of $\Omega$.
	Equality holds if and only if $\Omega$ is homothetic to a Wulff shape.
\end{thm}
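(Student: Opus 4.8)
The plan is to use the optimal-transport (Gromov-type) argument, which yields both the inequality and the equality case directly for sets of finite perimeter and which rests precisely on the anisotropic Cauchy--Schwarz inequality and on the identities for $\nabla F$ recalled above. Since each side of the inequality is positively homogeneous of degree one under the dilation $\Omega\mapsto\lambda\Omega$, I may normalize so that $A(\Omega)=\kappa=A(\mathcal W)$; the statement then reduces to $P_F(\Omega)\ge 2\kappa$, and it suffices to treat a bounded set of finite perimeter.

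First I would introduce the Brenier map $T=\nabla\phi$, with $\phi$ convex, pushing the normalized Lebesgue measure of $\Omega$ forward onto that of $\mathcal W$. As the two sets have equal area, the Monge--Amp\`ere equation gives $\det\nabla^2\phi=1$ almost everywhere, where $\nabla^2\phi$ is symmetric and positive semidefinite; the arithmetic--geometric mean inequality applied to its two nonnegative eigenvalues then yields the pointwise bound $\dive T=\Delta\phi=\operatorname{tr}\nabla^2\phi\ge 2(\det\nabla^2\phi)^{1/2}=2$. Integrating over $\Omega$ and applying the Gauss--Green formula,
\[
2\kappa=2A(\Omega)\le\int_\Omega \dive T\,dx=\int_{\de\Omega}\langle T,\nu_{\de\Omega}\rangle\,d\mathcal H^1.
\]
Since $T$ takes its values in $\overline{\mathcal W}=\{F^o\le 1\}$, the anisotropic Cauchy--Schwarz inequality gives $\langle T,\nu_{\de\Omega}\rangle\le F^o(T)\,F(\nu_{\de\Omega})\le F(\nu_{\de\Omega})$ on $\de\Omega$, so the last integral is bounded by $\int_{\de\Omega}F(\nu_{\de\Omega})\,d\mathcal H^1=P_F(\Omega)$. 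This proves $P_F(\Omega)\ge 2\kappa$, and undoing the normalization yields the stated inequality.

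For the equality case I would trace back the two estimates used. Equality in the arithmetic--geometric mean step forces the two eigenvalues of $\nabla^2\phi$ to coincide, and together with $\det\nabla^2\phi=1$ this gives $\nabla^2\phi=\mathrm{Id}$ almost everywhere; hence $T$ is a translation and $\Omega$ agrees, up to a null set, with a translate of $\mathcal W$, so after undoing the normalization $\Omega$ is homothetic to a Wulff shape, while the converse is checked directly. The main obstacle is the rigorous justification of the second paragraph for an arbitrary set of finite perimeter: the Brenier map is only of bounded variation, so $\dive T$ must be read as a measure whose singular part is discarded (which only strengthens the inequality) and the Gauss--Green formula must be applied along the reduced boundary; similarly, upgrading the almost-everywhere identity $\nabla^2\phi=\mathrm{Id}$ to the full rigidity statement is delicate. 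I would carry out these points following the mass-transportation scheme of Figalli--Maggi--Pratelli.

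As a cleaner alternative valid in the convex case, one can dispense with the transport map. Because $F$ is exactly the support function of $\overline{\mathcal W}$, the anisotropic perimeter of a convex body $K$ equals twice the planar mixed area, $P_F(K)=2V(K,\mathcal W)$, and both the inequality and its equality case then follow from the planar Minkowski inequality $V(K,\mathcal W)^2\ge A(K)\,A(\mathcal W)=\kappa\,A(K)$, whose equality case is the homothety of $K$ and $\mathcal W$. The general statement reduces to the convex one since replacing $\Omega$ by its convex hull does not decrease the area and, by the Euler identity $\langle\nabla F(\nu_0),\nu_0\rangle=F(\nu_0)$ combined with the anisotropic Cauchy--Schwarz inequality applied over each concavity, does not increase $P_F$.
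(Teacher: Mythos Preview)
The paper does not prove this theorem: it is recorded in the preliminaries as a known result, with the proof deferred entirely to the cited references. Your proposal therefore supplies what the paper omits, and there is no in-paper argument to compare against.

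Your optimal-transport proof is the standard Gromov/Figalli--Maggi--Pratelli scheme and is correctly assembled: Brenier map $T=\nabla\phi$ with $\det\nabla^2\phi=1$ a.e., AM--GM on the eigenvalues to obtain $\dive T\ge 2$, Gauss--Green, and then the bound $\langle T,\nu_{\de\Omega}\rangle\le F^o(T)\,F(\nu_{\de\Omega})\le F(\nu_{\de\Omega})$ since $T$ takes values in $\overline{\mathcal W}$. You are right to flag the BV regularity of $T$, the use of Gauss--Green along the reduced boundary, and the rigidity upgrade from $\nabla^2\phi=\mathrm{Id}$ a.e.\ as the places where one must invoke the full machinery for general sets of finite perimeter. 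The alternative route via mixed areas, $P_F(K)=2V(K,\mathcal W)$ together with Minkowski's inequality $V(K,\mathcal W)^2\ge A(K)A(\mathcal W)$, is also correct for convex $K$, and your reduction of the general case to the convex one---comparing each concave arc with the chord replacing it via the Euler identity $\langle\nabla F(\nu_0),\nu_0\rangle=F(\nu_0)$ and $F^o(\nabla F(\nu_0))=1$---is the standard calibration argument and is valid. One minor point: your normalization $A(\Omega)=\kappa$ presupposes $A(\Omega)<\infty$, which the hypothesis ``finite perimeter'' alone does not guarantee; this finiteness is of course needed for the inequality to have content.
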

\subsection{Anisotropic curvature}
Since the main result of this paper  concerns set of $\mathbb{R}^2$ with $C^2$ boundary, from now on we will restrict our study  to this class of sets.% the following definitions can be easily generalized for sets with Lipschitz boundary.
\begin{defn}
Let   $\Omega$ be an open, bounded  subset of $\mathbb{R}^2$ with $C^2$  boundary. At each point of $\de \Omega$, we define 
	the $F$-normal vector:  $$n^F_{\de \Omega}(x)=\nabla F(\nu_{\de \Omega}(x)),$$
	sometimes called the \textit{Cahn-Hoffman} field.
\end{defn}
In particular, we observe that, by the properties of $F$, we have that 
\begin{equation}\label{is_one}
	F^o(n^F_{\de \Omega})=1.
\end{equation}
We now give the definitions of anisotropic curvature and of anisotropic maximum curvature.
\begin{defn}
	Let   $\Omega\subset\mathbb{R}^2$ be an open, bounded set  with $C^2$ boundary. For  every $x\in\partial \Omega$, we define  
	the $F$-anisotropic  curvature as $$k^F_{\de \Omega}(x)= {\rm div}\left(n^F_{\de \Omega}(x)\right).$$
	Moreover, we denote by $k^F_{max}(\de \Omega)$ its  maximum over $\de \Omega$, that is
	$$ k^F_{max}(\de \Omega):=  || k^F_{\de \Omega}||_{L^{\infty}(\de \Omega)}. $$%\max_{x\in\de \Omega}k^F_{\de \Omega}(x).$$%=||k^F_{\de \Omega}||_{L^\infty(\de\Omega)}.$$
\end{defn}
We recall that for a Wulff shape of the form $\frac{1}{\lambda}\mathcal{W}\subset\mathbb{R}^2$, with $\lambda>0$,  we have that  (for the details of the computation see \cite{BP}), for every $x\in\de\left(\frac{1}{\lambda}\mathcal{W}\right)$, 
\begin{equation*}
k^F_{\de K}(x)=\lambda.
\end{equation*}
Finally, in order to prove  our main theorem, we will need the following result related to  the anisotropic curvature of a convex set, whose proof  can be found in \cite{GO} (see Theorem $0.7$). \
 %the following result is proved %(observing that the polar function $F$ is the support function of the Wulff shape $\mathcal{W}$).
\begin{prop}[Wulff-Gage inequality]
Let $K\subseteq\mathbb{R}^2$ be a bounded convex set with $C^2$ boundary.
Then, 	
\begin{equation}\label{Wulff Gage inequality}
\ds\int_{\de K} (k^F_{\de K}(x)^2F(\nu_{\de K}(x) ) \;d\mathcal{H}^1(x)\geq  \dfrac{\kappa P_F( K)}{A(K)}
\end{equation}
and there is equality if and only if $K$ is a Wulff shape.

\end{prop}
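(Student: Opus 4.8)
The plan is to recast everything through the support function and the Gauss (normal–angle) parametrization of $\de K$, reduce the claimed inequality to a one–dimensional integral inequality, and then establish that inequality by a sharp Wirtinger–type argument after a translation normalization. First I would parametrize $\de K$ by the Euclidean normal angle $\theta\in[0,2\pi)$, so that $\nu=(\cos\theta,\sin\theta)$ and $\tau=(-\sin\theta,\cos\theta)$. Writing $\phi(\theta):=F(\cos\theta,\sin\theta)$ for the restriction of $F$ to the unit circle, one checks that $\phi$ is exactly the support function $h_{\mathcal W}$ of the Wulff shape, and that the Euclidean radius of curvature of $\de K$ is $\rho_K=h_K+h_K''=\ds\frac{d\mathcal{H}^1}{d\theta}$, where $h_K$ is the support function of $K$. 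The decisive local identity is
\begin{equation*}
k^F_{\de K}=\frac{\rho_{\mathcal W}}{\rho_K},\qquad \rho_{\mathcal W}:=\phi+\phi''.
\end{equation*}
I would obtain it by differentiating $n^F_{\de K}=\nabla F(\nu)$ along the curve: since the tangential divergence gives $k^F_{\de K}=k\,\langle\nabla^2 F(\nu)\,\tau,\tau\rangle$ with $k=1/\rho_K$ the Euclidean curvature, and since $1$–homogeneity of $F$ forces $\nabla^2 F(\nu)\nu=0$, the only nonzero eigenvalue of $\nabla^2 F(\nu)$ is the tangential one, equal to $\Delta F(\nu)=\phi+\phi''=\rho_{\mathcal W}$. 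This is consistent with the stated value $k^F=\lambda$ on $\tfrac1\lambda\mathcal W$. Substituting $k^F_{\de K}=\rho_{\mathcal W}/\rho_K$, $F(\nu)=\phi$ and $d\mathcal{H}^1=\rho_K\,d\theta$ turns the three quantities in the statement into
\begin{equation*}
\ds\int_{\de K}(k^F_{\de K})^2F(\nu)\,d\mathcal{H}^1=\int_0^{2\pi}\frac{\rho_{\mathcal W}^2\,\phi}{\rho_K}\,d\theta,\quad P_F(K)=\int_0^{2\pi}\phi\,\rho_K\,d\theta,\quad \kappa=\frac12\int_0^{2\pi}\phi\,\rho_{\mathcal W}\,d\theta.
\end{equation*}

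Next I would read these integrals as mixed areas. Using $A(K,\mathcal{W})=\frac12\int_0^{2\pi}h_K\,\rho_{\mathcal W}\,d\theta=\frac12\int_0^{2\pi}\phi\,\rho_K\,d\theta$, one gets $P_F(K)=2A(K,\mathcal{W})$ and $\kappa=A(\mathcal{W})$, so the Proposition is equivalent to
\begin{equation*}
\int_0^{2\pi}\frac{\rho_{\mathcal W}^2\,\phi}{\rho_K}\,d\theta\ \ge\ \frac{2\,A(\mathcal{W})\,A(K,\mathcal{W})}{A(K)}.
\end{equation*}
Here I expect the main obstacle. A direct Cauchy--Schwarz applied to $\int_0^{2\pi}\phi\,\rho_{\mathcal W}\,d\theta$ (splitting $\phi\rho_{\mathcal W}=(\sqrt\phi\,\rho_{\mathcal W}/\sqrt{\rho_K})(\sqrt\phi\,\sqrt{\rho_K})$) only yields $\int_0^{2\pi}\rho_{\mathcal W}^2\phi/\rho_K\,d\theta\ge 4\kappa^2/P_F$, which by the anisotropic isoperimetric inequality $P_F^2\ge 4\kappa A$ is strictly \emph{weaker} than the target. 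Thus plain Cauchy--Schwarz is off by exactly the isoperimetric factor, and the sharp constant must come from a finer estimate.

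The crux is therefore a sharp Wirtinger/Hurwitz–type improvement of Cauchy--Schwarz. I would first use translation invariance to place $K$ at its Steiner point, which annihilates the first Fourier harmonics of $h_K$; recall also that the closing condition for $\de K$ already forces the $\pm1$ modes of $\rho_K$ to vanish. The ingredients are the two weighted Cauchy--Schwarz consequences, $\int_0^{2\pi}\phi\,h_K^2/\rho_K\,d\theta\ge P_F$ and $\int_0^{2\pi}\phi\,h_K/\rho_K\,d\theta\ge P_F^2/(4A)$, which I would combine with an \emph{anisotropic} Wirtinger inequality controlling the weighted oscillation of $h_K$ relative to $\phi$ on the first–harmonic–free subspace. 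The nonlinearity of $1/\rho_K$ is precisely what makes this fitting delicate, and I anticipate it to be the hard part. Finally, equality propagates through the Cauchy--Schwarz and Wirtinger steps exactly when all harmonics of $\rho_K$ beyond the constant mode are proportional to those of $\rho_{\mathcal W}$, i.e. $\rho_K\equiv\lambda\,\rho_{\mathcal W}$ for some $\lambda>0$; by uniqueness of the support function and the Steiner normalization this forces $h_K=\lambda\,\phi+\langle v,\nu\rangle$, so $K=\lambda\,\mathcal{W}+v$ is a Wulff shape. I would also remark that, since a general elliptic norm has a non–ellipsoidal Wulff shape, no global linear change of variables reduces the statement to the classical isotropic Gage inequality, so the anisotropic Wirtinger step is genuinely needed.
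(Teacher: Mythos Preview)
The paper does not supply a proof of this proposition; it simply cites Green--Osher \cite{GO} (Theorem~0.7). So there is no in-paper argument to compare your attempt against, and I can only assess the proposal on its own terms and against what \cite{GO} actually does.

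Your reduction is correct: $\phi=h_{\mathcal W}$, the identity $k^F_{\de K}=\rho_{\mathcal W}/\rho_K$ with $\rho_{\mathcal W}=\phi+\phi''$, and the three integral expressions for $\int (k^F)^2F(\nu)\,d\mathcal H^1$, $P_F(K)$ and $\kappa$ are all right. You also correctly observe that Cauchy--Schwarz with weight $\phi$ yields only $\int_0^{2\pi}\rho_{\mathcal W}^2\phi/\rho_K\,d\theta\ge 4\kappa^2/P_F(K)$, which falls short of the target by exactly the isoperimetric deficit.

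The gap is the step you yourself label ``the hard part''. You invoke an unspecified ``anisotropic Wirtinger inequality'' to close that deficit, but you neither state it nor prove it, and the two auxiliary Cauchy--Schwarz bounds you list do not combine into the claim. This is not a minor technicality: even in the isotropic case ($\phi\equiv1$, $\rho_{\mathcal W}\equiv1$) Gage's inequality $\int_0^{2\pi}\rho_K^{-1}\,d\theta\ge \pi L/A$ is \emph{not} a corollary of Wirtinger's inequality, because the integrand is nonlinear in $\rho_K$ and translating $K$ to its Steiner point kills the first harmonic of $h_K$ but does nothing to linearize $1/\rho_K$. The argument in \cite{GO} does not go through Fourier/Wirtinger at all; it relies on the anisotropic Steiner polynomial and an associated Bonnesen-type inequality, from which the sharp constant $\kappa\,P_F(K)/A(K)$ and the equality case drop out directly. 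As written, your proposal stops exactly where the substantive argument begins.
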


\subsection{Anisotropic curvature flow}
Throughout this paper, we will use the following notations. We consider a family of closed  curves $u=u(s,t):\mathbb{S}^1\times [0,T]\to\mathbb{R}^2$, where  $s$  the arc-length parameter and we use the conventional notation $\partial_s(u(s,t))=u_s(s,t)$.
Moreover,  $\tau(s,t)=u_s(s,t)=\left(\sin\left(\theta(s,t)\right) ,-\cos \left(\theta(s,t)\right)\right)$ will be the unit tangent  and  $\nu(s,t)=\left(\cos\left(\theta(s,t)\right), \sin\left(\theta(s,t)\right)\right)$ the unit normal of $u$; $\theta =\theta(s,t)$ is called the normal angle (determined modulo $2\pi$) and we may use it to parametrize the curve $u(\cdot,t)$.
%Whenever no confusion is possible, we shall write $\tau$, $\nu$ and $k$ in place of $ \tau_u$, $ \nu_u$ and $k_u$, where with $k_u$ we denote the scalar curvature.%Throughout this paper we will use the following notations. We consider a family of closed  curves $u=u(s,t)=\left(x(s,t),y(s,t)\right):\mathcal{S}^1\times [0,T]\to\mathbb{R}^2$, denoting  by  $s$  the arc-length parameter, having that  $\partial_s(\cdot)=\partial_x(\cdot)/|u_x|$. Moreover,  $\tau_u=u_x/|u_x|=u_s=(\sin \theta,-\cos \theta)$ will be the unit tangent and  $\nu_u=(\cos\theta, \sin\theta)$ the unit normal of $u$; $\theta =\theta(s)$ is called the normal angle, it is determined modulo $2\pi$ and we may use it to parametrize the curve.
 %Whenever no confusion is possible, we shall write $\tau$, $\nu$ and $k$ in place of $ \tau_u$, $ \nu_u$ and $k_u$, where with $k_u$ we denote the scalar curvature.
The classical Frenet formulas assert that 
\begin{equation}\label{firs_frenet}
u_{ss}(s,t)=\tau_s(s,t)=k (s,t)\nu(s,t), 
\end{equation}
\begin{equation}\label{second_frenet}
\nu_s(s,t)=-k (s,t)\tau(s,t),
\end{equation}
where $k$ is the scalar curvature.
Another usefull relation is the following
\begin{equation}\label{normal_angle}
	k(s,t)=\theta_s(s,t).
\end{equation}
Finally, we recall the definition of support function  (see for istance \cite{S}). Let $\gamma:\mathcal{S}^1\to \mathbb{R}^2$ be  a smooth Jordan  curve,
 %Let $u:\mathcal{S}^1\times[0,T]\to\mathbb{R}^2$ be  a smooth Jordan  curve. Fixing $t\in[0,T]$, we are considering  a curve $u(\cdot,t)$ of the above family
  let us take the normal angle $\theta$ as  parameter for $\gamma$ and let us  denote its components by  $\gamma(\theta)=\left(x(\theta),y(\theta)\right)$. The support function associated to $\gamma$ is defined as 
  \begin{equation*}
  h(\theta):=\langle \left(x(\theta),y(\theta)\right),\left(\cos\theta,\sin\theta\right)\rangle .
  \end{equation*}
If we  denote by $'$ the derivative  with respect to $\theta$, we have that 
\begin{equation*}
h'(\theta)=-x(\theta)\sin(\theta)+y(\theta)\cos(\theta).
\end{equation*}
Therefore, $\gamma$ can be recovered from $h$ by 
$$x(\theta)=h(\theta) \cos(\theta)-h'(\theta)\sin(\theta),$$
$$y(\theta)=h (\theta)\sin (\theta)+h'(\theta)\cos(\theta).$$
%We recall here the definition of support function (see for istance \cite{S}).
%\begin{defn}
%	Let $E\subset\mathbb{R}^2$ be a closed convex set with $\emptyset\neq E\neq\mathbb{R}^2$. The support function of $E$ is defined by
	%$$h(\theta):=\sup \{ \langle x,\theta\rangle\;:\;x\in E   \} \quad {\rm for}\;\; \theta\in\mathbb{S}^1,$$
%	where $\langle\cdot,\cdot\rangle$ denotes the scalar product in $\mathbb{R}^2$.
%\end{defn}
% $h(\theta):=\langle \left(x(\theta),y(\theta)\right),\left(\cos\theta,\sin\theta\right)\rangle$, 
%we have that 
%$$x_u(\theta)=h \cos(\theta)-h_\theta'\sin(\theta),$$
%$$y_u(\theta)=h \sin (\theta)+h_\theta'\cos(\theta),$$
%where $h_\theta$ is the derivative of $h$ with respect to $\theta$.%support function, come si scrivono le componenti anche weinstock
We now give the definition of the anisotropic flow; for more details and for the proofs of the properties below see for istance \cite{CZ}. 
In the following, whenever no confution is possible, we shall write $ \tau$, $ \nu$ and $k$ as referred to $u$, using a notation that will not account for the choice of the curve, otherwise we will specify the curve to which they are referred.
\begin{defn}
The family $u:\mathbb{S}^1\times [0,T]\to\mathbb{R}^2$ of smooth Jordan curves flows by anisotropic curvature flow if 
\begin{equation}\label{anis_curv_flow}
\dfrac{\partial u(s,t)}{\partial t}=  \left(F(\nu(s,t))\:k^F(s,t)\right)\nu(s,t).
\end{equation}
\end{defn}
In the following two remarks we recall some important properties of the anisotropic curvature flow that we will use later.
\begin{rem}\label{regulari}
We observe 	that, since the curve $u$ is smooth and the anisotropy $F$ is elliptic,  we can write the anisotropic curvature as
\begin{equation}\label{regcur}
k^F(s,t)=\left(\nabla^2F(\nu(s,t))\tau(s,t)\cdot\tau(s,t)\right)k(s,t).
\end{equation}
Consequently, we have that the anisotropic curvature is controlled from above and from below by the Euclidean curvature. 
\end{rem}

\begin{rem}\label{convexification }
	
If we consider  a family of curves $u(\cdot,t )$ flowing by  anisotropic curvature flow,  we have that the limiting shape is a round point and that  there exists a time $\bar{t}\in [0,T)$	such that $u(\cdot,t)$ is convex for $t\in [\bar{t},T)$, even though the initial curve is not convex.
For a proof of this fact see, for istance, \cite{CZ, CZ2,GL}.
\end{rem}
%$K^F_{u}(\theta):=K^F_u(x_u(\theta),y_u(\theta)$.
As observed in \cite{MNP}, we can rewrite the anisotropic flow as follows. For semplicity of notation,  in the following formulas,  we will not mention the dependence from $s$ and $t$. So, let us define 
\begin{equation*}
\phi(\theta):= F(\nu)=F(\cos \theta,\sin\theta)
\end{equation*}
and let us  observe that,  by the divergence theorem,  $k^F=\left(\nabla_\xi^2\left(F^o(\nu)\right)\tau\cdot\tau\right) k$. Since  we have  $F^o(\theta)+\left(F^o(\theta)\right)''=\nabla_\xi^2\left(F^o(\nu)\right)\tau\cdot\tau$, then 
\begin{equation}\label{flux_2}
	u_t=\psi(\theta) k \nu,
\end{equation}
where
\begin{equation}\label{phi_def}
\psi(\theta):=\phi(\theta)\left(\phi(\theta)+\phi''(\theta)\right).
\end{equation}
In particular, the proof of the following result  can be found in \cite{MNP} (proof of Proposition $1$).

\begin{prop}
It holds 
\begin{equation}\label{equazione del calore anisotropa}
\left(\partial_t-\psi\partial_{ss}\right) \frac{\left(k^F\right)^2}{2}\leq \left(  3 k h \phi'+h' k \phi   \right)\partial_s(k^F)^2+(k^F)^4,
\end{equation}
wher $h=\phi+\phi''$.
\end{prop}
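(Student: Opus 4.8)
The plan is to obtain \eqref{equazione del calore anisotropa} by the classical computation of evolution equations along a planar curvature flow, specialised to the normal speed $V:=\psi(\theta)k$ appearing in \eqref{flux_2}. First I would record, exactly as in \cite{GH,CZ}, the commutation rule and the two basic evolution identities for the flow $u_t=V\nu$: differentiating the line element in time (at a fixed parameter) and using the Frenet formulas \eqref{firs_frenet}, \eqref{second_frenet} gives $\partial_t\partial_s-\partial_s\partial_t=kV\partial_s$, and then, differentiating $\tau=(\sin\theta,-\cos\theta)$ and $\nu=(\cos\theta,\sin\theta)$ in time and using \eqref{normal_angle},
\begin{equation*}
\partial_t\theta=\partial_sV,\qquad \partial_tk=\partial_{ss}V+k^2V.
\end{equation*}
These hold for the smooth flow and need no convexity.

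The second step is to compute $\partial_t k^F$. Since $\nabla^2F(\nu)\tau\cdot\tau=\phi+\phi''=h$ (the identity behind \eqref{regcur}), we have $k^F=h(\theta)k$, and because $\psi=\phi h$ the speed becomes $V=\psi k=\phi\,k^F$. Hence
\begin{equation*}
\partial_t k^F=h'(\theta)(\partial_t\theta)k+h(\theta)\partial_t k=h'k\,\partial_sV+h(\partial_{ss}V+k^2V).
\end{equation*}
Substituting $V=\phi k^F$, expanding the two $s$-derivatives with the help of $\theta_s=k$, and eliminating every occurrence of $k_s$ through $k_s=(k^F)_s/h-k^Fh'k/h^2$ (a consequence of $k=k^F/h$), one gets $\partial_t k^F$ as an explicit polynomial in $(k^F)_{ss}$, $(k^F)_s$, $k^F$ with coefficients assembled from $\phi,\phi',\phi'',h,h',k$.

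Finally I would apply the operator $\partial_t-\psi\partial_{ss}$ to $\tfrac12(k^F)^2$. Writing $\partial_t\tfrac{(k^F)^2}{2}=k^F\partial_t k^F$ and $\psi\partial_{ss}\tfrac{(k^F)^2}{2}=\psi(k^F)_s^2+\psi k^F(k^F)_{ss}$ and subtracting, the second-order term $\psi k^F(k^F)_{ss}=\phi h\,k^F(k^F)_{ss}$ cancels against the contribution $h\phi\,k^F(k^F)_{ss}$ coming from $h\,\partial_{ss}V$. What remains splits into three parts: a term carrying a pure factor $(k^F)^2$, terms proportional to $k^F(k^F)_s=\tfrac12\partial_s(k^F)^2$, and the term $-\psi(k^F)_s^2$. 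The step I expect to be the main obstacle is the bookkeeping needed to see the cancellations: the two $\phi'h'k^2$ contributions — one from $h'k\,\partial_sV$, the other from the $k_s$-substitution — annihilate each other, so the coefficient of $(k^F)^2$ collapses to $hk^2(\phi+\phi'')=h^2k^2=(hk)^2=(k^F)^2$, which is exactly the term $(k^F)^4$ on the right of \eqref{equazione del calore anisotropa}; collecting the first-order terms yields the transport coefficient $3kh\phi'+h'k\phi$. The left-over term $-\psi(k^F)_s^2=-\phi h\,(k^F)_s^2$ is nonpositive, since $\phi=F(\nu)>0$ and $h>0$ by ellipticity of $F$ (strong convexity of $F^2$ makes the Wulff shape strictly convex), so discarding it turns the resulting identity into the inequality \eqref{equazione del calore anisotropa}. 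Beyond this, the argument is only careful algebra with the many terms generated by differentiating $V$ twice.
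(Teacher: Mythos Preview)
The paper does not give its own proof of this proposition: it simply cites \cite{MNP} (proof of Proposition~1 there). Your proposal reproduces precisely that computation --- derive $\partial_t\theta=\partial_sV$ and $\partial_tk=\partial_{ss}V+k^2V$ for the normal speed $V=\psi k=\phi k^F$, differentiate $k^F=hk$, expand, cancel the second-order terms, collect the first-order terms into the transport coefficient, and drop the nonpositive $-\psi(k^F)_s^2$ to pass from the identity to the inequality. This is exactly the route taken in \cite{MNP}, and your sketch is correct; the only caveat is to be careful with the sign conventions (the paper uses the outer normal and $\tau=(\sin\theta,-\cos\theta)$), which affects intermediate signs but not the final form of \eqref{equazione del calore anisotropa}.
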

In \cite{CZ} can be found the computation of the first derivative of the area enclosed by a family of curves that flows by the   anisotropic curvature flow (see the following Proposition). More precisely, the first derivative is  proved integrating by parts the  formula that gives the area enclosed by a curve $\gamma$, that is 
\begin{equation*}
A(\gamma)=-\frac{1}{2}\ds\int_{\gamma}\langle\gamma,\nu\rangle\; ds.
\end{equation*}

\begin{prop}\label{derivative area}
Let  $u:\mathbb{S}^1\times [0,T]\to\mathbb{R}^2$ a family of smooth Jordan curvan satisfying \eqref{anis_curv_flow}. If we denote by $u_t(\cdot):=u(\cdot,t)$ and by $A(t)$ the area enclosed by $u_t$,  we have 
\begin{equation}\label{derar}
\frac{dA(t)}{dt}=-\ds\int_{u_t}  F(\nu_{u_t}(s,t)) k^F_{u_t}(s,t) ds,
\end{equation}
where $\nu_{u_t}$ and $k^F_{u_t}$ are respectively the unit normal  and the anisotropic curvature of the curve $u_t$.
\end{prop}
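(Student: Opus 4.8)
The plan is to differentiate the area functional
$$A(t)=-\frac{1}{2}\int_{u_t}\langle u,\nu\rangle\,ds$$
in time, the only delicacy being that the arc-length parametrization itself depends on $t$. To handle this cleanly I would fix a material parameter $p\in\mathbb{S}^1$, independent of $t$, and write $ds=|u_p|\,dp$ with $u_p=\partial_p u$ and $\tau=u_p/|u_p|$; all time derivatives are then taken at fixed $p$, so that $\partial_t$ commutes with $\partial_p$. Abbreviating the normal speed by $V:=F(\nu)\,k^F$, the flow \eqref{anis_curv_flow} reads $u_t=V\nu$.

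First I would record the evolution identities for the geometric quantities. From $u_t=V\nu$ together with $\nu_p=|u_p|\nu_s=-k|u_p|\tau$ (Frenet's formula \eqref{second_frenet}) one computes $u_{pt}=\partial_p(V\nu)=V_p\nu-kV|u_p|\tau$, whence
$$\partial_t|u_p|=\frac{\langle u_p,u_{pt}\rangle}{|u_p|}=-kV|u_p|,$$
that is $\partial_t(ds)=-kV\,ds$. Differentiating $\tau=u_p/|u_p|$ in $t$ and using the previous line gives $\tau_t=V_s\nu$; since $|\nu|=1$ forces $\nu_t\perp\nu$, the relation $\langle\nu_t,\tau\rangle=-\langle\nu,\tau_t\rangle=-V_s$ then yields $\nu_t=-V_s\tau$.

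With these in hand I would differentiate under the integral sign, using $\langle u_t,\nu\rangle=V$, $\langle u,\nu_t\rangle=-V_s\langle u,\tau\rangle$, and $\partial_t(ds)=-kV\,ds$, to obtain
$$\frac{dA(t)}{dt}=-\frac{1}{2}\int_{\mathbb{S}^1}\partial_t\big(\langle u,\nu\rangle|u_p|\big)\,dp=-\frac{1}{2}\int_{u_t}\big(V-V_s\langle u,\tau\rangle-kV\langle u,\nu\rangle\big)\,ds.$$
The remaining step is one integration by parts on the closed curve: since $u_s=\tau$ and $\tau_s=k\nu$ (Frenet's formula \eqref{firs_frenet}), the boundary term vanishes and
$$\int_{u_t}V_s\langle u,\tau\rangle\,ds=-\int_{u_t}V\big(1+k\langle u,\nu\rangle\big)\,ds.$$
Substituting this identity back cancels both position-dependent quantities $\langle u,\tau\rangle$ and $\langle u,\nu\rangle$ and collapses the bracket to $2V$, leaving exactly $\frac{dA}{dt}=-\int_{u_t}V\,ds=-\int_{u_t}F(\nu_{u_t})\,k^F_{u_t}\,ds$, as claimed.

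I expect the only genuine obstacle to be the bookkeeping of the time-dependent arc-length element: the operators $\partial_s$ and $\partial_t$ do not commute, and the whole computation hinges on deriving $\partial_t(ds)=-kV\,ds$ and $\nu_t=-V_s\tau$ correctly. Passing to the fixed parameter $p$ neutralizes this difficulty, after which everything reduces to Frenet's formulas and a single integration by parts with no boundary contribution, the curves being closed.
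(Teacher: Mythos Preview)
Your argument is correct and follows exactly the approach the paper indicates: differentiate the enclosed-area formula $A(t)=-\tfrac{1}{2}\int_{u_t}\langle u,\nu\rangle\,ds$ in time and integrate by parts. The paper itself does not give a proof but simply refers to \cite{CZ} for this computation, so you have in fact supplied the details the paper omits.
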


% Fare un remark su questa cosa?????

\section{Main result and its proof}

\begin{thm}
Let  $\Omega\subseteq \mathbb{R}^2$ such that  $\gamma:=\de \Omega$  is a  smooth Jordan curve. Then,
\begin{equation}\label{main}
k^F_{\max}(\gamma)\geq \sqrt{\dfrac{\kappa}{A(\Omega)}}
\end{equation}
and there is  equality if and only if $\Omega$ coincides with a Wulff shape.
\end{thm}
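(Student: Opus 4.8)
The plan is to mimic the strategy of \cite{P}, using the anisotropic curvature flow \eqref{anis_curv_flow} as a tool to compare $\Omega$ with a Wulff shape of the same area. The first step is to reduce to the convex case. Starting from $\gamma=\de\Omega$, we run the flow; by Remark \ref{convexification } there is a time $\bar t$ after which the evolving curve $u(\cdot,t)$ is convex, and the limiting shape is a round point. We must check that $k^F_{\max}$ cannot increase along the flow up to time $\bar t$: this follows from the maximum principle applied to the parabolic differential inequality \eqref{equazione del calore anisotropa} satisfied by $(k^F)^2/2$. Indeed, the term $(k^F)^4$ on the right-hand side is the only obstruction to monotonicity, and a standard ODE comparison argument shows that if $M(t):=\max_s k^F(s,t)^2$ then $M'(t)\le 2M(t)^2$ in the viscosity sense; combined with the fact that $M$ blows up exactly at the extinction time $T$, one deduces that for every $t<T$ the supremum of $k^F$ on $u(\cdot,t)$ is bounded by a quantity related to the remaining area (using $k^F_{\mathcal W_r}=1/r$ and $A(\mathcal W_r)=\kappa r^2$). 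More precisely, comparing with the shrinking Wulff shape, one expects $k^F_{\max}(u(\cdot,t))\ge\sqrt{\kappa/A(t)}$ to be \emph{preserved} if known at time $\bar t$; so it suffices to prove the inequality for the convex curve $u(\cdot,\bar t)$, since $A(\bar t)\le A(\Omega)$ and $k^F_{\max}(u(\cdot,\bar t))\le k^F_{\max}(\gamma)$.

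Once reduced to a convex curve $K=u(\cdot,\bar t)$, the second step invokes the Wulff--Gage inequality \eqref{Wulff Gage inequality}. We estimate
\begin{equation*}
\frac{\kappa\,P_F(K)}{A(K)}\le\int_{\de K}\lto k^F_{\de K}(x)\rto^2 F(\nu_{\de K}(x))\,d\mathcal H^1(x)\le k^F_{\max}(\de K)^2\int_{\de K}F(\nu_{\de K}(x))\,d\mathcal H^1(x)=k^F_{\max}(\de K)^2\,P_F(K),
\end{equation*}
so dividing by $P_F(K)$ gives $k^F_{\max}(\de K)^2\ge \kappa/A(K)$, i.e. $k^F_{\max}(\de K)\ge\sqrt{\kappa/A(K)}$. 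Chaining this with the flow monotonicity from the first step yields
\begin{equation*}
k^F_{\max}(\gamma)\ge k^F_{\max}(\de K)\ge\sqrt{\frac{\kappa}{A(K)}}\ge\sqrt{\frac{\kappa}{A(\Omega)}},
\end{equation*}
which is \eqref{main}.

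For the equality case, one traces back when each inequality above is an equality. Equality in the chained estimate forces $A(K)=A(\Omega)$, so the flow does not decrease area up to $\bar t$; by \eqref{derar} this means $\int_{u_t}F(\nu)k^F\,ds=0$ for $t\le\bar t$, which (since $k^F$ is eventually positive on a convex curve and the flow is smooth) forces the curve to be stationary, so $\Omega=K$ is already convex. Then equality in the second chain of displayed inequalities forces equality in Wulff--Gage, hence $K$ is a Wulff shape, and equality in $k^F\le k^F_{\max}$ a.e. is then automatic. Conversely, for a Wulff shape $\mathcal W_r$ one computes directly $k^F_{\max}=1/r$ and $\kappa/A(\mathcal W_r)=1/r^2$, giving equality.

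The main obstacle I anticipate is making the first step fully rigorous: one needs short-time existence, uniqueness and regularity for the anisotropic flow starting from an arbitrary smooth Jordan curve (available from \cite{CZ,CZ2,GL}), the convexification statement, and — most delicately — the precise monotonicity of $k^F_{\max}$ along the flow extracted from \eqref{equazione del calore anisotropa}. Controlling the bad term $(k^F)^4$ requires care with the transport term $\lto 3kh\phi'+h'k\phi\rto\partial_s(k^F)^2$, which vanishes at a spatial maximum of $(k^F)^2$, so the maximum principle applies; but relating the resulting ODE bound to the exact comparison $k^F_{\max}\ge\sqrt{\kappa/A(t)}$ (rather than a weaker non-sharp bound) is the technical heart of the argument and mirrors the subtlety already present in the isotropic proof of \cite{P}.
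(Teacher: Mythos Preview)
Your Step~2 (the convex case via Wulff--Gage, exactly as in \eqref{chain_Gage}) matches the paper. The gap is in Step~1: the inequality $k^F_{\max}(u(\cdot,\bar t))\le k^F_{\max}(\gamma)$ that you use in your final chain is \emph{false}. Along the (unrescaled) anisotropic flow the curve shrinks and the maximum anisotropic curvature typically \emph{increases}; already for a shrinking Wulff shape $\mathcal W_{r(t)}$ one has $k^F\equiv 1/r(t)\to\infty$. The differential inequality you quote, $M'(t)\le 2M(t)^2$ for $M=(k^F_{\max})^2$, does not yield monotonicity of $M$, only a growth bound, and your displayed chain cannot be salvaged as written.

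The paper's remedy (following \cite{P}) is to \emph{rescale} the flow to constant area, $U(\cdot,t)=\sqrt{A_0/A(t)}\,u(\cdot,t)$, and argue by contradiction. Assuming $k^F_{\max}(\gamma)<C:=\sqrt{\kappa/A_0}$, one shows that $k^F_{\max}(U(\cdot,t))$ can never reach the value $C$: at a first touching time/point the transport term in \eqref{equazione del calore anisotropa} vanishes, and the rescaling contributes an additional negative term $A'(t)/A(t)\cdot (k^F_U)^2$ to the evolution of $(k^F_U)^2$ which exactly dominates the bad $(k^F_U)^4$ term precisely when $k^F_U=C$ (here one uses that $A'(t)=-\int F(\nu)k^F\,ds$ is controlled via an anisotropic Gauss--Bonnet type identity). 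This gives a genuine barrier at the threshold $C$, not mere boundedness. Since the rescaled curves eventually become convex with area $A_0$, Wulff--Gage then forces $k^F_{\max}(U)\ge C$, the desired contradiction. Your equality argument also needs revision: the paper does not trace equality through the flow but instead uses a direct local deformation argument (if $\gamma$ is not a Wulff shape, perturb near a point of non-maximal curvature to strictly decrease the enclosed area while keeping $k^F_{\max}$ fixed).
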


\begin{proof}

\underline{\textbf{Step 1: Uniqueness} }
Using a stardard argument we  prove that, if  inequality \eqref{main} is proved, then  equality holds only for Wulff shapes. Let assume that \eqref{main} is  true and, by contradiction,  that the equality holds for a curve $\gamma$ that is not the boundary of a Wulff shape. 
\noindent
Thus, there exists a point  $x\in \gamma$ such that  $k^F_{\de K}(x)\leq k^F_{\max}(\gamma)$. By a small local deformation around $x$, we can construct a smooth Jordan curve $\gamma'$ such that the following two conditions hold
\begin{itemize}
	\item  $ k^F_{\max}(\gamma')= k^F_{\max}(\gamma)$,
	\item the area  $A'$  enclosed by $\gamma'$ is strictly smaller than the area $A$ enclosed by $\gamma$.
\end{itemize}
In this way we have a contradiction, since $$k^F_{\max}(\gamma')<\sqrt{\kappa/A'}.$$
\noindent
\underline{\textbf{Step 2: The inequality holds for convex  curves} }
\noindent
Let us  assume that $\gamma$ is a  Jordan curve that is convex.  Using inequality \eqref{Wulff Gage inequality}, we obtain
\begin{equation}\label{chain_Gage}
\dfrac{\kappa}{A(K)}P_F(K)\leq\ds\int_{\de K} (k^F_{\de K}(x)^2F(\nu_{\de K}(x) ) \;d\mathcal{H}^1(x)\leq \left(k^F_{\max}(\de K)\right)^2 P_F(K)
\end{equation}
and so inequality \eqref{main} immediately follows.

\noindent
\underline{\textbf{Step 3: The inequality holds for general curves} }
\noindent
Using the anisotropic curvature flow, the case of  general curves will be reduced to the case of convex curves, in the same spirit of \cite{P}.
We set $A_0:=A(K)$ and we prove that $k^F_{\max}(\gamma)\geq \sqrt{A_0/\kappa}:=C$,  for every admissible $\gamma$.
By contradiction, there exists a smooth Jordan curve $\overline {\gamma}$ (not convex) such that 
\begin{equation}\label{contradiction}
k^F_{\max}(\overline {\gamma})<C.
\end{equation}
Let $ u(\cdot,t )$, with $t\in[0,T]$, be the  family of curves  evolving by anisotropic curvature flow with $u(\cdot, 0)=\overline {\gamma}(\cdot)$; so that  at time $t=T$  the area enclosed by $u(\cdot,T)$ is $0$. We consider the family
\begin{equation*}
U(\cdot, t):=f(t) u(\cdot,t),
\end{equation*}
where $f$ is a non-negative function chosen in  such a way that every curve of the family  $U(\cdot, t)$ encloses constant area. Therefore,  
\begin{equation*}
f(t)=\sqrt{\dfrac{A_0}{A(t)}},
\end{equation*}
where $A(t)$ is the area enclosed by $u_t(\cdot):=u(\cdot,t)$.   Moreover, we observe that 
\begin{equation}\label{relation_curvature}
k^F_U=\left(\dfrac{1}{f}\right) k^F_u.
\end{equation}
Recalling that we denote by $'$ the derivative with respect to $\theta$, using  \eqref{relation_curvature} and \eqref{equazione del calore anisotropa}, we obtain
\begin{multline}\label{main_computation}
	 \left(  \partial_t-\psi \partial_{ss} \right) \dfrac{\left(k^F_U\right)^2}{2}= \left(  \partial_t-\psi \partial_{ss} \right) \left[ \dfrac{A(t)}{2 A_0} \dfrac{\left(k^F_u\right)^2}{2}  \right]
	=\\=A'(t) \frac{\left(k^F_u\right)^2}{2 A_0}+\frac{A(t)}{A_0} \left(  \partial_t-\psi \partial_{ss} \right)\left(k^F_u\right)^2\leq\\ \leq A'(t)  \frac{\left(k^F_u\right)^2}{2 A_0}+\frac{A(t)}{A_0}\left[\left(  3 k_u h \phi'+h' k_u \phi   \right)\partial_s(k_u^F)^2+(k_u^F)^4\right]=\\
	=A'(t)  \frac{\left(k^F_u\right)^2}{2 A_0}+\frac{A(t)}{A_0}\left(k^F_u\right)^4+\frac{A(t)}{A_0}\left[\left(  3 k_u h \phi'+h' k_u \phi   \right)\partial_s(k_u^F)^2\right]=\\=
	\frac{A'(t) }{A(t)}(k_U^F)^2+\dfrac{A_0}{A(t)} (k_U^F)^4+\frac{A(t)}{A_0}\left[\left(  3 k_u h \phi'+h' k_u \phi   \right)\partial_s(k_u^F)^2\right].
\end{multline}
At this point let us introduce some useful notations; we set  $k_u^F(\theta,t):=k^F_{u_t}(\theta)$ and 
$k_U^F(\theta,t):=k^F_{U_t}(\theta)$.
Now, by \eqref{contradiction}, there exists $M\in(0,C)$ such that $k^F_{\overline {\gamma}}(\theta)<M$ for every $\theta\in\mathbb{S}^1$ and we want to show that for every $\theta\in\mathbb{S}^1$ and for every $t$ 
\begin{equation}\label{final}
k^F_{u}(\theta,t)<M<C. 
\end{equation}
In order to prove \eqref{final}, we proceed again by contradiction, assuming that there exists $t^*\in(0,T)$ for which it is possible to find a $\theta^*$ such that 
$ k^F_U(\theta^*,t^*)=M$. This means that $\theta^*$ is a maximum for $k_U^F(\cdot,t^*)$ and, as a consequence, it is a maximum also for $k_u^F(\cdot,t^*)$. So, taking into account that at a maximal point $\partial_s(k^F_u)$ vanishes and $\left(  k^F_u \right)_{ss}(\theta^*,t^*)$ is non-positive, from \eqref{main_computation} we obtain that
\begin{equation}\label{min}
 \left(  \partial_t-\psi \partial_{ss} \right) \dfrac{\left(k^F_U(\theta^*,t^*)\right)^2}{2}\leq \frac{M^2}{A'(t^*)}\left(\frac{A'(t^*)}{2}+A_0 M^2\right).
\end{equation}
Using  $\eqref{derar}$, we have that 
\begin{multline}
A'(t^*)=-\ds\int_{u_{t^*}}  F(\nu_{u_{t^*}}(s,t^*)) k^F_{u_{t^*}}(s,t^*) ds=-\int_{\de\Omega_{t^*}}F(\nu_{u_{t^*}}(x)) k^F_{u_{t^*}}(x)\;d \mathcal{H}^1(x)\\\leq -a D\int_{u_{t^*}}k_{u_{t^*}}(x)d \mathcal{H}^1(x)=-2\pi a D,
\end{multline}
wher $\Omega_{t^*}$ is the set enclosed by $u_{t^*}$. In the last inequality we have used the following facts: that, for every unit vector $v$, $F(v)\geq a $, the fact that the anisotropic curvature is controlled from above by the classical curvature since $F$ is elliptic (see Remark $2.7$), and, finally, the Gauss-Bonnet theorem.
\noindent
As a consequence, 
\begin{equation}\label{boh}
 \left(  \partial_t-\psi \partial_{ss} \right) \dfrac{\left(k^F_U(\theta^*,t^*)\right)^2}{2}\leq -\dfrac{A_0 M^2}{A(t^*)}\left(\frac{\pi a D}{A_0}-M^2\right)<0,
\end{equation}
since we can assume, using a suitable scaling, that $A_0$ is such that $\frac{\pi a D}{A_0}=C$.
Now, having  $\partial_{ss}\left(k^F_U(\theta^*,t^*)\right)^2/2<0$, from \eqref{boh}, we have that
\begin{equation*}
	\partial_t \left(k^F_U(\theta^*,t^*)\right)^2<0,
\end{equation*}
and so
\begin{equation}\label{finally}
\partial_t \left(k^F_U(\theta^*,t^*)\right)<0.
\end{equation}
It follows that $k^F_U(\theta^*,t^*-\epsilon)>M$, for $\epsilon>0$ small enough, which contradicts the choice of $t^*$. 
In this way we have proved  \eqref{final}.

\noindent
Now, for the properties of the anisotropic curvature flow (see Remark $2.8$ and the reference therein), we know that for some $\tau>0$ the curve $U(\cdot,\tau)$ is convex and therefore, thanks to Step $2$, we have that for some $\theta\in[0,2\pi]$
\begin{equation}
k^F(\theta,\tau)\geq C,
\end{equation}
that contradicts \eqref{final}, concluding the proof.

\end{proof}

\small{

}

\end{document}